\definecolor{webgreen}{rgb}{0,.5,0}
\definecolor{webbrown}{rgb}{.6,0,0}
\tikzset{circle node/.style = {circle,inner sep=1pt,draw, fill=white},
        X node/.style = {fill=white, inner sep=1pt},
        dot node/.style = {circle, draw, inner sep=5pt}
        }
\newtheorem{theorem}{Theorem}
\newtheorem{proposition}[theorem]{Proposition}
\newtheorem{conjecture}[theorem]{Conjecture}
\theoremstyle{definition}
\newtheorem{example}[theorem]{Example}
\newcommand{\seqnum}[1]{\href{http://oeis.org/#1}{\underline{#1}}}
\begin{document}

\begin{center}
\vskip 1cm{\LARGE\bf Riordan arrays, the $A$-matrix, and Somos $4$ sequences} \vskip 1cm \large
Paul Barry\\
School of Science\\
Waterford Institute of Technology\\
Ireland\\
\href{mailto:pbarry@wit.ie}{\tt pbarry@wit.ie}
\end{center}
\vskip .2 in

\begin{abstract} We characterize certain Riordan arrays by their $A$-matrices and $\rho$ sequences. We conjecture the form of a generic $A$-matrix which leads to Somos $4$ sequences. We find an $A$-matrix that produces a Riordan quasi-involution, and we study the $A$-matrices and $\rho$ sequences of the moment matrices of certain perturbed orthogonal polynomials. \end{abstract}

\section{Introduction}

An important feature of Riordan arrays \cite{Book, Survey, SGWW} is that they have a sequence characterization. Specifically, a lower-triangular array $(t_{n,k})_{0 \le n,k \le \infty}$ is a Riordan array if and only if there exists a sequence $a_n, n\ge 0$, such that
$$t_{n,k}=\sum_{i=0}^{\infty}a_i t_{n-1,k-1+i}.$$
This sum is actually a finite sum, since the matrix is lower-triangular. For such a matrix $M$, we let  $$P=M^{-1}\overline{M},$$ where $\overline{M}$ is the matrix $M$ with its top row removed. Then the matrix $P$ has the form that begins
$$\left(
\begin{array}{ccccccc}
 z_0 & a_0 & 0 & 0 & 0 & 0 & 0 \\
 z_1 & a_1 & a_0 & 0 & 0 & 0 & 0 \\
 z_2 & a_2 & a_1 & a_0 & 0 & 0 & 0 \\
 z_3 & a_3 & a_2 & a_1 & a_0 & 0 & 0 \\
 z_4 & a_4 & a_3 & a_2 & a_1 & a_0 & 0 \\
 z_5 & a_5 & a_4 & a_3 & a_2 & a_1 & a_0 \\
 z_6 & a_6 & a_5 & a_4 & a_3 & a_2 & a_1 \\
\end{array}
\right).$$

Here, $z_0,z_1,z_2,\ldots$ is an ancillary sequence which exists for any lower-triangular matrix. The matrix $P$ is called the \emph{production matrix} of the Riordan matrix $M$. The sequence characterization of renewal arrays was first described by Rogers \cite{He_A, Rogers}.

An alternative approach to the sequence characterization of a Riordan array is to use a matrix characterization \cite{He_M, Merlini}.  One form that such a matrix characterization may take is the following \cite{He_M, Merlini}.
\begin{theorem} A lower-triangular array $(t_{n,k})_{0 \le n,k \le \infty}$ is a Riordan array if and only if there exists another array $A=(a_{i,j})_{i,j \in \mathbb{N}_0}$ with $a_{0,0} \ne 0$, and a sequence $(\rho_j)_{j \in \mathbb{N}_0}$ such that
$$t_{n+1,k+1}=\sum_{i \ge 0} \sum_{j \ge 0} a_{i,j} t_{n-i, k+j} + \sum_{j \ge 0} \rho_j t_{n+1,k+j+2}.$$
\end{theorem}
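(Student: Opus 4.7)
If $M$ is Riordan, the Rogers characterization supplies a sequence $(a_j)_{j \geq 0}$ with $a_0 \neq 0$ such that $t_{n+1, k+1} = \sum_j a_j t_{n, k+j}$. Take $a_{0, j} := a_j$, $a_{i, j} := 0$ for $i \geq 1$, and $\rho_j := 0$ for all $j$. Then the claimed double sum collapses to $\sum_j a_j t_{n, k+j} = t_{n+1, k+1}$, and $a_{0, 0} = a_0 \neq 0$ as required.

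\textbf{Converse direction, strategy.} This is the substantive part. The plan is to pass to column generating functions and look for a uniform ratio between consecutive columns. Write $C_k(z) := \sum_n t_{n, k} z^n$, and set $A_j(z) := \sum_{i \geq 0} a_{i, j} z^i$, $\mathcal{A}(z, w) := \sum_{i, j \geq 0} a_{i, j} z^i w^j$, and $\mathcal{R}(w) := \sum_j \rho_j w^j$. Multiplying the given relation by $z^n$ and summing over $n \geq 0$, using $t_{n-i, k+j} = 0$ for $n < i$ and $t_{0, \ell} = 0$ for $\ell \geq 1$, the hypothesis becomes
$$C_{k+1}(z) \;=\; z \sum_{j \geq 0} A_j(z)\, C_{k+j}(z) \;+\; \sum_{j \geq 0} \rho_j\, C_{k+j+2}(z), \qquad k \geq 0, \quad (\star)$$
and the target is to produce a series $f(z)$ with $f(0) = 0$, $f'(0) \neq 0$, satisfying $C_{k+1}(z) = f(z)\, C_k(z)$ for every $k$; then $M$ is the Riordan array $(C_0(z), f(z))$ by definition.

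\textbf{Construction of $f$.} Substituting the desired ansatz $C_{k+j} = f^j C_k$ into $(\star)$ and cancelling $C_k(z)$ yields the implicit equation
$$f(z) \;=\; z\, \mathcal{A}(z, f(z)) \;+\; f(z)^2\, \mathcal{R}(f(z)).$$
Because $\mathcal{A}(0, 0) = a_{0, 0} \neq 0$, this equation has a unique solution $f(z)$ with $f(0) = 0$, obtained coefficient by coefficient, with $f'(0) = a_{0, 0}$. With this $f$ in hand, I would then establish $C_{k+1}(z) = f(z)\, C_k(z)$ by induction on $k$, extracting coefficients of $z^n$ from $(\star)$ and exploiting the lower-triangularity bound $\operatorname{val}_z C_k \geq k$ to truncate the infinite sums at each stage.

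\textbf{Main obstacle.} The hard part is the inductive identification $C_{k+1} = f\, C_k$. Since $(\star)$ couples $C_{k+1}$ simultaneously with $C_{k+2}, C_{k+3}, \ldots$, the proportionality is not immediately forced. I expect to resolve this by a double induction on $(k, n)$: at each stage only finitely many higher-indexed columns contribute to the coefficient of $z^n$, because $\operatorname{val}_z C_{k+j} \geq k + j$, and the fixed-point equation for $f$ produces exactly the algebraic identity needed to close the induction. The hypothesis $a_{0, 0} \neq 0$ is essential both to construct $f$ and to make the induction pivot correctly at each step.
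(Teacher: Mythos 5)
First, note that the paper itself offers no proof of this theorem: it is quoted as a known result and attributed to He and to Merlini--Rogers--Sprugnoli--Verri, so there is no in-paper argument to compare against. Judged on its own terms, your forward direction is complete and correct, and in the converse you have correctly identified the central object: the implicit equation $f(z)=z\,\mathcal{A}(z,f(z))+f(z)^2\mathcal{R}(f(z))$ is exactly the relation $\frac{f(x)}{x}=\sum_{i\ge 0}x^iR^{(i)}(f(x))+\frac{f(x)^2}{x}\rho(f(x))$ that the paper records immediately after the theorem statement, and your solvability claim ($f(0)=0$, $f'(0)=a_{0,0}\ne 0$, coefficients determined recursively) is right.

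The genuine gap is the step you yourself flag: you never actually prove $C_{k+1}=f\,C_k$, and the route you propose --- manipulating $(\star)$ directly and hoping a double induction on $(k,n)$ closes --- is the hard way, because $(\star)$ couples $C_{k+1}$ to infinitely many higher columns and the proportionality really is not forced by $(\star)$ alone without an extra input. The missing input is a uniqueness argument, and with it the difficulty dissolves. Observe that the recurrence, together with column $0$, determines the entire array: taking $k=n$ gives $t_{n+1,n+1}=a_{0,0}t_{n,n}$ (all $\rho$-terms vanish there), and then within row $n+1$ one solves for $t_{n+1,k+1}$ by descending induction on the column index, since the $\rho$-terms only reference entries $t_{n+1,k+j+2}$ in columns $\ge k+2$ of the same row and the $a_{i,j}$-terms reference rows $\le n$. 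So define $N=(C_0(z),f(z))$ and check by a one-line generating-function computation that $N$ satisfies the same recurrence: the right-hand side for $N$ equals $[z^{n+1}]\,C_0 f^k\left(z\,\mathcal{A}(z,f)+f^2\mathcal{R}(f)\right)=[z^{n+1}]\,C_0f^{k+1}$, using your fixed-point equation. Since $M$ and $N$ satisfy the same recurrence and share column $0$, uniqueness gives $M=N$, hence $M$ is Riordan. Your valuation bound $\operatorname{val}_zC_{k+j}\ge k+j$ is still needed, but only to guarantee that all the sums are locally finite; it is not the engine of the induction. Without this reorganization your sketch does not constitute a proof of the converse; with it, it does.
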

The power series definition of a Riordan array is as follows. A Riordan array is defined by a pair of power series, $g(x)$ and $f(x)$, where
$$g(x)=g_0 + g_1 x + g_2 x^2+ \cdots, \quad g_0 \ne 0,$$ and
$$f(x)=f_1 x + f_2 x^2+ f_3 x^3+\cdots, \quad f_0=0 \text{ and } f_1 \ne 0.$$
We then have
$$t_{n,k}=[x^n] g(x)f(x)^k,$$ where $[x^n]$ is the functional that extracts the coefficient of $x^n$.
The relationship between $f(x)$ and the pair $(A, \rho)$ is the following.
$$\frac{f(x)}{x}=\sum_{i \ge 0} x^i R^{(i)}(f(x))+\frac{f(x)^2}{x}\rho(f(x)),$$ where
$R^{(i)}$ is the generating series of the $i$-th row of $A$, and $\rho(x)$ is the generating series of the sequence $\rho_n$.

We shall call the matrix $A=(a_{i,j})_{i,j \in \mathbb{N}_0}$ the $A$-matrix, while we call the sequence $a_n$ the $A$-sequence of the Riordan array. The generating function of $a_n$ is denoted by $A(x)$. We have
$$A(x)=\frac{x}{\bar{f}(x)},$$ where $\bar{f}(x)$ is the compositional inverse of $f(x)$ (thus we have $f(\bar{f}(x))=x$ and $\bar{f}(f(x))=x$). We sometimes write $\bar{f}(x)=\text{Rev}(f)(x)$.

The generating function of the sequence $z_n$ is denoted by $Z(x)$.
We have
$$(g(x), f(x))=\left(\frac{A(x)-xZ(x)}{A(x)}, \frac{x}{A(x)}\right)^{-1}.$$
Note that while the production matrix $P$ of a Riordan array is unique, the pair $(A, \rho)$ is not necessarily unique.

The set of Riordan arrays $(g(x), f(x))$ is a group under the multiplication law
$$(g(x), f(x)) \cdot (u(x), v(x)) = (g(x)u(f(x)), v(f(x))$$ and the inverse of $(g(x), f(x))$ in the group is given by
$$(g(x), f(x))^{-1}=\left(\frac{1}{g(\bar{f}(x))}, \bar{f}(x)\right)$$ where
$\bar{f}(x)$ is the compositional inverse of $f(x)$.

The set of Riordan arrays of the form $\left(\frac{f(x)}{x}, f(x)\right)$ is a subgroup of the Riordan group, called the subgroup of Bell matrices.

We shall refer to sequences, where known, by their A$nnnnnn$ numbers in the Online Encyclopedia of Integer Sequences \cite{SL1, SL2}. The sequence $0^n$ is the sequence that begins $1,0,0,0,\ldots$, with generating function $x^0=1$. Note that we only show suitable truncations of Riordan arrays and of production arrays.
\begin{example} Pascal's triangle (also called the binomial matrix) $\left(\binom{n}{k}\right)$ is the Riordan array
$$\left(\frac{1}{1-x}, \frac{x}{1-x}\right).$$
This begins
$$\left(
\begin{array}{ccccccc}
 1 & 0 & 0 & 0 & 0 & 0 & 0 \\
 1 & 1 & 0 & 0 & 0 & 0 & 0 \\
 1 & 2 & 1 & 0 & 0 & 0 & 0 \\
 1 & 3 & 3 & 1 & 0 & 0 & 0 \\
 1 & 4 & 6 & 4 & 1 & 0 & 0 \\
 1 & 5 & 10 & 10 & 5 & 1 & 0 \\
 1 & 6 & 15 & 20 & 15 & 6 & 1 \\
\end{array}
\right).$$
The $A$-sequence for this triangle is given by $A(x)=1+x$. It can also be defined by the $A$-matrix
$$A=\left(
\begin{array}{ccc}
 1 & 0 & 0 \\
 0 & -1 & -1 \\
\end{array}
\right),$$ and the $\rho$ sequence $1,0,0,\ldots$.

This follow since the solution to the equation
$$\frac{u}{x}=1-x(u+u^2)+\frac{u^2}{x}$$ is
$$u=f(x)=\frac{x}{1-x}.$$
\end{example} 
We shall call the result of multiplying a Riordan array (on the left) by the binomial matrix as its \emph{binomial transform}. 
\begin{example} The identity matrix $(1,x)$ can be defined by the $A$-matrix
$$A=\left(
\begin{array}{ccc}
 1 & 0 & 1 \\
 -1 & -1 & 0 \\
\end{array}
\right),$$ and the $\rho$ sequence $1,0,0,\ldots$.
This follows since the equation
$$\frac{u}{x}=1+u^2-x(1+u)+\frac{u^2}{x}$$ has the solution $u=f(x)=x$.

Another $A$-matrix and $\rho$ sequence pair that define the identity matrix is given by
$$A=\left(
\begin{array}{ccc}
 1 & -1 & 1 \\
 0 & -1 & 0 \\
\end{array}
\right),$$ and the $\rho$ sequence $1,0,0,\ldots$.

It can also be defined by the simpler $A$-matrix $A=(1)$ and $\rho_n=0$.

\end{example}
\begin{example} We consider the Motzkin triangle defined by the Riordan array
$$\left(\frac{1-x-\sqrt{1-2x-3x^2}}{2x^2},\frac{1-x-\sqrt{1-2x-3x^2}}{2x}\right).$$ This can be expressed as
$$\left(\frac{1}{1-x}c\left(\frac{x^2}{(1-x)^2}\right), \frac{x}{1-x}c\left(\frac{x^2}{(1-x)^2}\right)\right),$$ where $c(x)=\frac{1-\sqrt{1-4x}}{2x}$ is the generating function of the Catalan numbers $C_n=\frac{1}{n+1} \binom{2n}{n}$ \seqnum{A000108}.

This array begins
$$\left(
\begin{array}{ccccccc}
 1 & 0 & 0 & 0 & 0 & 0 & 0 \\
 1 & 1 & 0 & 0 & 0 & 0 & 0 \\
 2 & 2 & 1 & 0 & 0 & 0 & 0 \\
 4 & 5 & 3 & 1 & 0 & 0 & 0 \\
 9 & 12 & 9 & 4 & 1 & 0 & 0 \\
 21 & 30 & 25 & 14 & 5 & 1 & 0 \\
 51 & 76 & 69 & 44 & 20 & 6 & 1 \\
\end{array}
\right).$$ The production matrix takes the simple form
$$\left(
\begin{array}{ccccccc}
 1 & 1 & 0 & 0 & 0 & 0 & 0 \\
 1 & 1 & 1 & 0 & 0 & 0 & 0 \\
 0 & 1 & 1 & 1 & 0 & 0 & 0 \\
 0 & 0 & 1 & 1 & 1 & 0 & 0 \\
 0 & 0 & 0 & 1 & 1 & 1 & 0 \\
 0 & 0 & 0 & 0 & 1 & 1 & 1 \\
 0 & 0 & 0 & 0 & 0 & 1 & 1 \\
\end{array}
\right),$$ and hence the $Z$-sequence is the sequence $1,1,0,0,0,\ldots$ and the $A$-sequence is $1,1,1,0,0,0,\ldots$. Then $Z(x)=1+x$ and $A(x)=1+x+x^2$.

This Riordan array is also defined by the $A$-matrix
$$A=\left(
\begin{array}{ccc}
 1 & 0 & 1 \\
 1 & 1 & 1 \\
\end{array}
\right),$$ and the $\rho$ sequence $0,0,0,\ldots$.

In this example, the $A$-sequence is of a simple form, as is the $A$-matrix. In many cases, however, the $A$-matrix may be of a simple form, while the $A$-sequence is of a more complex nature.
\end{example}

\begin{example}
We now take the example where
$$A=\left(
\begin{array}{ccc}
 1 & 0 & 1 \\
 1 & 1 & 0 \\
\end{array}
\right),$$
and $\rho_n=0$ for all $n \ge 0$.
We find that $u=f(x)$ satisfies the equation
$$\frac{u}{x}=1+u^2+x(1+u),$$ or
$$f(x)=\frac{1-x^2-\sqrt{1-6x^2-4x^3+x^4}}{2x}=\frac{x}{1-x}c\left(\frac{x^2(1+x)}{(1-x^2)^2}\right).$$
The resulting Riordan array is then given by the Bell subgroup element $\left(\frac{f(x)}{x},f(x)\right)$. This begins
$$\left(
\begin{array}{ccccccc}
 1 & 0 & 0 & 0 & 0 & 0 & 0 \\
 1 & 1 & 0 & 0 & 0 & 0 & 0 \\
 2 & 2 & 1 & 0 & 0 & 0 & 0 \\
 3 & 5 & 3 & 1 & 0 & 0 & 0 \\
 7 & 10 & 9 & 4 & 1 & 0 & 0 \\
 13 & 24 & 22 & 14 & 5 & 1 & 0 \\
 31 & 52 & 57 & 40 & 20 & 6 & 1 \\
\end{array}
\right).$$
We have
$$\bar{f}(x)=\frac{\sqrt{1+4x+6x^2+x^4}-x^2-1}{2(1+x)},$$ and so $A(x)$ is given by
$$A(x)=\frac{1+x^2+\sqrt{1+4x+6x^2+x^4}}{2}.$$
Thus $a_n$ is the sequence that begins
$$1, 1, 1, -1, 2, -3, 3, 1, -15, 47, -98,\ldots.$$
The expansion of $f(x)$ in this case begins
$$ 0, 1, 1, 2, 3, 7, 13, 31, 65, 156, 351, 849,\ldots.$$
The expansion of $\frac{f(x)}{x}$ is given by \seqnum{A171416}. The Hankel transform of this sequence
begins
$$1, 1, 2, 3, 7, 23, 59, 314, 1529, 8209, 83313,\ldots$$ or \seqnum{A006720}$(n+2)$. This is essentially the Somos $4$ sequence.

In this example, we see that while the $A$-sequence is reasonably complicated, the $A$-matrix is simple. In addition, this $A$-matrix leads to a power series $f(x)$ such that the Hankel transform of the expansion of $\frac{f(x)}{x}$ is the Somos $4$ sequence.

In the sequel, we shall be interested in determining a form of $A$-matrix that will always lead to a Somos $4$ type sequence.
\end{example}

\section{General Somos $4$ sequences}
We call a sequence $s_n$ a $(\alpha, \beta)$ Somos $4$ sequence if the terms $s_n$ satisfy
$$ s_n=\frac{\alpha s_{n-1} s_{n-3} + \beta s_{n-2}^2}{s_{n-4}}, \quad n \ge 4.$$
In exceptional cases, where a divide by zero may occur, the more general condition
$$ s_n s_{n-4} = \alpha s_{n-1} s_{n-3} + \beta s_{n-2}^2$$ is appropriate. There are many proven and conjectured results concerning $(\alpha, \beta)$ Somos $4$ sequences and Riordan arrays \cite{PSI, Invariant}. These sequences are closely related to elliptic curves.

\section{$A$-matrices and Somos conjectures}
We consider the case
$$A=\left(
\begin{array}{ccc}
 1 & a & b \\
 1 & c & d \\
\end{array}
\right),$$ with $\rho_n=0$ for all $n$.
To find $u=f(x)$, we solve the equation
$$\frac{u}{x}=1+ au + bu^2+ x(1+cu+ du^2).$$
This gives us
$$f(x)=\frac{1-ax-cx^2-\sqrt{1-2ax-(a^2-2(2b+c))x^2+2(ac-2(b+d))x^3+(c^2-4d)x^4}}{2x(dx+b)}.$$
It follows that
$$\frac{f(x)}{x}=\frac{1+x}{1-ax-cx^2}c\left(\frac{x^2(1+x)(b+dx)}{(1-ax-cx^2)^2}\right).$$
This now allows us to make the following conjecture.
\begin{conjecture} The Hankel transform of the expansion of $\frac{f(x)}{x}$ is a
$((b + a b + d)^2, b^4 - b^3 (2 + 3 a + a^2 - 2 c) +  b (a + a^2 - a c - 2 d) d + (1 + a - c) d^2 -  b^2 (c + a c - c^2 + 2 d + 3 a d))$ Somos $4$ sequence.
\end{conjecture}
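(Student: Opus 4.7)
The plan is to convert the statement into one about Jacobi continued fractions. Recall that if $g(x) = \sum_{n \ge 0} s_n x^n$ has a J-fraction expansion
$$g(x) = \cfrac{1}{1 - \alpha_0 x - \cfrac{\beta_1 x^2}{1 - \alpha_1 x - \cfrac{\beta_2 x^2}{1 - \alpha_2 x - \cdots}}},$$
then its Hankel transform is $h_n = \prod_{k=1}^{n} \beta_k^{n+1-k}$. The conjecture therefore reduces to showing that, for $g(x) = f(x)/x$ as in the statement, this sequence $h_n$ satisfies $h_n h_{n-4} = \alpha\, h_{n-1} h_{n-3} + \beta\, h_{n-2}^2$ with the stated polynomials $\alpha, \beta$ in $(a, b, c, d)$.

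First I would derive the continued fraction parameters $(\alpha_n, \beta_n)$ from the defining quadratic $u = x + axu + bxu^2 + x^2 + cx^2 u + dx^2 u^2$ for $u = f(x)$. Because this equation is quadratic in $u$, repeated substitution (equivalently, contraction of the Stieltjes fraction naturally associated to the Catalan-type expression for $g(x)$ already given in the text) yields explicit rational formulas for $\alpha_n, \beta_n$ in $a, b, c, d$. With these in hand, I would compute $h_0, h_1, \ldots, h_7$ symbolically and solve the resulting linear system for $\alpha$ and $\beta$; additional values $h_8, h_9$ would serve as consistency checks, confirming the stated $\alpha = (b + ab + d)^2$ and the longer polynomial $\beta$.

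The main obstacle is upgrading such a verification to a proof that $h_n$ satisfies the Somos $4$ recurrence for \emph{all} $n$, not merely its first few values. Following the framework developed in \cite{PSI, Invariant}, the most promising route is to identify $h_n$, after clearing denominators, with (a specialization of) an elliptic divisibility sequence on a curve $E$ over $\mathbb{Q}(a, b, c, d)$ whose invariants recover $\alpha$ and $\beta$. The perfect-square form of $\alpha$ is suggestive of the square of a point's coordinate under translation, while the high-degree polynomial $\beta$ should relate to the discriminant of the cubic defining $E$. Constructing $E$ explicitly from the data $(a, b, c, d)$, and proving that the $\beta_n$ appearing in the J-fraction are the associated division polynomial values, is where the real content of the proof lies; once this attachment is made, the Somos $4$ identity is automatic from the group law on $E$.
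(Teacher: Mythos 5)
The statement you are addressing is labelled a \emph{conjecture} in the paper, and the author offers no proof of it; indeed the concluding section states explicitly that ``the form of $\beta$ in general makes it uncertain at the moment how such a conjecture might be proven.'' So there is no proof in the paper to compare against, and the question is only whether your proposal actually closes the gap. It does not. Your first stage --- extracting the J-fraction coefficients $(\alpha_n,\beta_n)$ from the quadratic defining $u=f(x)$, computing $h_0,\dots,h_7$ via $h_n=\prod_{k=1}^{n}\beta_k^{\,n+1-k}$, and solving for $\alpha$ and $\beta$ --- is a finite symbolic verification. It can at best recover the stated values of $\alpha$ and $\beta$ and confirm the recurrence for small $n$; this is essentially the same evidence the paper already adduces through its worked examples, and you acknowledge as much.

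The genuine gap is the second stage, which you describe but do not carry out. To prove the Somos $4$ relation for all $n$ you would need (i) a proof that the J-fraction coefficients $\beta_n$ are well defined for generic $(a,b,c,d)$ (the continued-fraction expansion can degenerate, as the paper's own example with Hankel transform $1,0,-4,-16,\dots$ shows --- there $h_n$ vanishes periodically and only the cleared form $s_ns_{n-4}=\alpha s_{n-1}s_{n-3}+\beta s_{n-2}^2$ survives); (ii) an explicit construction of the elliptic curve $E$ over $\mathbb{Q}(a,b,c,d)$ from the $A$-matrix data; and (iii) a proof that the $\beta_n$, or the $h_n$ after clearing denominators, coincide with values of division polynomials on $E$. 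None of these is supplied; item (iii) in particular is precisely the open problem, and the heuristic remarks about $\alpha$ being a perfect square and $\beta$ relating to a discriminant are suggestive but carry no logical force. As written, the proposal is a research programme consistent with the framework of \cite{PSI, Invariant}, not a proof, and the conjecture remains open.
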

\begin{example}
The matrix
$$A=\left(
\begin{array}{ccc}
 1 & -2 & -1 \\
 1 & 1 & 0 \\
\end{array}
\right),$$ leads to the Riordan array with
$$\frac{f(x)}{x}=\frac{1+x}{1+2x-x^2}c\left(-\frac{x^2(1+x)}{(1+2x-x^2)^2}\right).$$
This expands to give the sequence that begins
$$ 1, -1, 2, -3, 3, 1, -15, 47, -98, 133, -17,\ldots.$$
The Hankel transform of this sequence begins
$$1, 1, -2, -1, 3, -5, -7, -4, 23, 29, -59,\ldots.$$
This is a $(1,1)$ Somos $4$ sequence (compare with \seqnum{A006769}).
The expansion $v_n$ of $\frac{f(x)}{x}$ thus satisfies the convolution recurrence \cite{Rec}
$$v_n= -2 v_{n-1}- v_{n-2}- \sum_{i=0}^{n-4} v_{i+1} v_{n-i-3},$$ with
$v_1=-1$ and $v_2=2$.
\end{example}
\begin{proposition} The Bell matrix $(t_{n,k})$ which has
$$A=\left(
\begin{array}{ccc}
 1 & a & b \\
 1 & c & d \\
\end{array}
\right)$$ as its $A$-matrix satisfies
$$t_{n,k}=t_{n-1,k-1}+at_{n-1,k}+ b t_{n-1,k+1}+t_{n-2,k-1}+c t_{n-2,k}+d t_{n-2,k+1},$$
with $t_{n,k}=0$ if $k<0$ or $k>n$, $t_{0,0}=1$ and $t_{1,0}=a+1$.
This is the Riordan array
$$\left(\frac{1+x}{1-ax-cx^2}c\left(\frac{x^2(1+x)(b+dx)}{(1-ax-cx^2)^2}\right), \frac{x(1+x)}{1-as-cx^2}c\left(\frac{s^2(1+x)(b+dx)}{(1-ax-cx^2)^2}\right)\right).$$
\end{proposition}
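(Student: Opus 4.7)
The plan is to read the recurrence off directly from Theorem 1 specialized to the given $2\times 3$ $A$-matrix, and then to produce the Riordan form by solving the functional equation relating $f(x)$ to the $A$-matrix (which was already set up just before Conjecture~1 in the case of this exact template).

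First I would apply the identity of Theorem 1 with $a_{0,0}=1,\ a_{0,1}=a,\ a_{0,2}=b,\ a_{1,0}=1,\ a_{1,1}=c,\ a_{1,2}=d$ and $\rho_j\equiv 0$. Only these six $a_{i,j}$ are nonzero, so the double sum collapses and yields
$$t_{n+1,k+1}=t_{n,k}+at_{n,k+1}+bt_{n,k+2}+t_{n-1,k}+ct_{n-1,k+1}+dt_{n-1,k+2}.$$
Shifting $n\mapsto n-1$ and $k\mapsto k-1$ produces exactly the stated recurrence. The triangularity conditions $t_{n,k}=0$ for $k<0$ or $k>n$ are automatic from the lower-triangular Riordan shape.

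Next I would pin down the initial data. By construction $t_{0,0}=g_0=1$. For $t_{1,0}$, I would expand the defining equation $u/x=1+au+bu^2+x(1+cu+du^2)$ with $u=f(x)=x+f_2x^2+\cdots$; matching the coefficient of $x$ gives $f_2=a+1$, hence $t_{1,0}=[x]\,(f(x)/x)=a+1$.

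For the Riordan-array form, I would invoke the formula for $f(x)$ that was already derived in the paragraph immediately preceding Conjecture~1 by solving the same quadratic equation $u/x=1+au+bu^2+x(1+cu+du^2)$ for $u$; this yields
$$\frac{f(x)}{x}=\frac{1+x}{1-ax-cx^2}\,c\!\left(\frac{x^2(1+x)(b+dx)}{(1-ax-cx^2)^2}\right),$$
where $c(x)$ is the Catalan generating function. Since the hypothesis states that the triangle is a Bell matrix, one has $g(x)=f(x)/x$, so $(t_{n,k})=(f(x)/x,\,f(x))$, which is precisely the Riordan array displayed in the proposition.

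The only non-routine step is the passage from the $A$-matrix equation to the closed form for $f(x)/x$ as a Catalan-composed series, but that transformation has already been executed in the body of the paper just before the conjecture and may be quoted verbatim here; the remaining work is bookkeeping of coefficients and boundary values.
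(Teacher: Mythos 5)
Your proposal is correct and follows essentially the same route as the paper's own (much terser) proof: read the recurrence off the matrix characterization, quote the closed form for $f(x)$ already derived before Conjecture~1, identify the array as the Bell element $\left(\frac{f(x)}{x}, f(x)\right)$, and extract $t_{1,0}=a+1$ from the expansion of $f$. Your version simply fills in the coefficient-matching details that the paper leaves implicit.
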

\begin{proof} The recurrence follows from the form of the $A$-matrix. We have calculated $f(x)$ above. The Bell element corresponding to this is then $\left(\frac{f(x)}{x}, f(x)\right)$. The value for $t_{1,0}$ follows from this.
\end{proof}
We note that the Riordan array
$$\left(\frac{1-(a-1)x}{1-ax-cx^2}c\left(\frac{x^2(1+x)(b+dx)}{(1-ax-cx^2)^2}\right), \frac{x(1+x)}{1-ax-cx^2}c\left(\frac{x^2(1+x)(b+dx)}{(1-ax-cx^2)^2}\right)\right)$$ satisfies the same recurrence with $t_{1,0}=1$. Note that we shall always assume that $t_{n,k}=0$ if $k<0$ or $k>n$, $t_{0,0}=1$ in the following.

\section{Further examples with $\rho_n=0$}
In the following examples, we choose $\rho_n=0$ for all $n \ge 0$.
\begin{example} We let
$$A=\left(
\begin{array}{ccc}
 1 & 1 & 0 \\
 1 & 1 & 1 \\
\end{array}
\right).$$
Then the triangle $(t_{n,k})$ with
$$t_{n,k}=t_{n-1,k-1}+t_{n-1,k}+t_{n-2,k-1}+ t_{n-2,k}+ t_{n-2,k+1},$$ and
$t_{1,0}=2$ begins
$$\left(
\begin{array}{ccccccc}
 1 & 0 & 0 & 0 & 0 & 0 & 0 \\
 2 & 1 & 0 & 0 & 0 & 0 & 0 \\
 3 & 4 & 1 & 0 & 0 & 0 & 0 \\
 6 & 10 & 6 & 1 & 0 & 0 & 0 \\
 13 & 24 & 21 & 8 & 1 & 0 & 0 \\
 29 & 59 & 62 & 36 & 10 & 1 & 0 \\
 66 & 146 & 174 & 128 & 55 & 12 & 1 \\
\end{array}
\right).$$
This is the Bell matrix
$$\left(\frac{1+x}{1-x-x^2}c\left(\frac{x^3(1+x)}{(1-x-x^2)^2}\right), \frac{x(1+x)}{1-x-x^2}c\left(\frac{x^3(1+x)}{(1-x-x^2)^2}\right)\right).$$
The Hankel transform of the sequence $t_{n,0}$ begins
$$1, -1, -4, -3, 19, 67, \ldots,$$ which is the $(1,1)$ Somos $4$ sequence \seqnum{A178628}$(n+1)$.
In fact, the generating function for $t_{n,0}$ may be expressed as the continued fraction \cite{Wall}
$$\cfrac{1}{1-2x+
\cfrac{x^2}{1+2x+
\cfrac{4x^2}{1-\frac{11}{4}x-
\cfrac{\frac{3}{16}x^2}{1-\frac{43}{12}x-\cdots}}}},$$ where $1,4,-\frac{3}{16},\ldots$ are the $x$-coordinates of the multiples of $(0,0)$ on the elliptic curve
$$y^2-xy-y=x^3+x^3+x.$$
\end{example}
\begin{example}
We let
$$A=\left(
\begin{array}{ccc}
 1 & 0 & 1 \\
 1 & 0 & 1 \\
\end{array}
\right).$$
Then the triangle $(t_{n,k})$ with
$$t_{n,k}=t_{n-1,k-1}+t_{n-1,k+1}+t_{n-2,k-1}+ t_{n-2,k+1},$$ and
$t_{1,0}=1$ begins
$$\left(
\begin{array}{ccccccc}
 1 & 0 & 0 & 0 & 0 & 0 & 0 \\
 1 & 1 & 0 & 0 & 0 & 0 & 0 \\
 1 & 2 & 1 & 0 & 0 & 0 & 0 \\
 3 & 3 & 3 & 1 & 0 & 0 & 0 \\
 5 & 8 & 6 & 4 & 1 & 0 & 0 \\
 11 & 17 & 16 & 10 & 5 & 1 & 0 \\
 25 & 38 & 39 & 28 & 15 & 6 & 1 \\
\end{array}
\right).$$
This is the Bell matrix
$$((1+x)c(x^2(1+x+x^2)), x(1+x)c(x^2(1+x+x^2))).$$
The sequence $t_{n,0}$ begins
$$1, 1, 1, 3, 5, 11, 25, 55, 129, 303, 721, 1743,\ldots.$$
This is \seqnum{A104545}, which counts the number of Motzkin paths of length $n$ having no consecutive $(1,0)$ steps (Emeric Deutsch). Its Hankel transform begins
$$1, 0, -4, -16, -64, 0, 4096, 65536, 1048576, 0, -1073741824,\ldots.$$ This is \seqnum{A162547}, the Somos $4$ variant $$s_n = (4s_{n-1}s_{n-3} - 4s_{n-2})^2) / s_{n-4}, \quad \text{ if } n \neq 4k+1,$$ otherwise $ s_n = 0$, with $s_{-2} = s_{-1} = s_0 = 1$.
\end{example}
\begin{example}  We let
$$A=\left(
\begin{array}{ccc}
 1 & 0 & 1 \\
 1 & 4 & 1 \\
\end{array}
\right).$$
Then the triangle $(t_{n,k})$ with
$$t_{n,k}=t_{n-1,k-1}+t_{n-1,k+1}+t_{n-2,k-1}+4 t_{n-2,k}+ t_{n-2,k+1},$$ and
$t_{1,0}=1$ begins
$$\left(
\begin{array}{ccccccc}
 1 & 0 & 0 & 0 & 0 & 0 & 0 \\
 1 & 1 & 0 & 0 & 0 & 0 & 0 \\
 5 & 2 & 1 & 0 & 0 & 0 & 0 \\
 7 & 11 & 3 & 1 & 0 & 0 & 0 \\
 33 & 24 & 18 & 4 & 1 & 0 & 0 \\
 63 & 105 & 52 & 26 & 5 & 1 & 0 \\
 261 & 262 & 231 & 92 & 35 & 6 & 1 \\
\end{array}
\right).$$
The sequence $t_{n,0}$ which begins
$$1, 1, 5, 7, 33, 63, 261, 619, 2333, 6355, 22669,\ldots$$ has a Hankel transform that begins
$$1, 4, 28, 304, 14272, 676864,\ldots.$$
This is a $(4,12)$ Somos $4$ sequence. In general, the $A$-matrix
$$A=\left(
\begin{array}{ccc}
 1 & 0 & 1 \\
 1 & r & 1 \\
\end{array}
\right)$$
leads to a $(4, r^2-4)$ Somos $4$ sequence.
\end{example}
\section{A quasi-involution}
We consider the case of the $A$-matrix 
$$A=\left(
\begin{array}{ccc}
 1 & 0 & 1 \\
 0 & 1 & 0 \\
\end{array}
\right).$$ We solve the equation
$$\frac{u}{x}=1+u^2+xu,$$ to obtain
$$u=f(x)=\frac{1-x^2-\sqrt{1-6x^2+x^4}}{2x}.$$ Thus the Bell matrix $(t_{n,k})$ defined by the above $A$-matrix is the Riordan array
$$\left(\frac{1-x^2-\sqrt{1-6x^2+x^4}}{2x^2},\frac{1-x^2-\sqrt{1-6x^2+x^4}}{2x}\right).$$
This triangle begins
$$\left(
\begin{array}{cccccccc}
 1 & 0 & 0 & 0 & 0 & 0 & 0 & 0 \\
 0 & 1 & 0 & 0 & 0 & 0 & 0 & 0 \\
 2 & 0 & 1 & 0 & 0 & 0 & 0 & 0 \\
 0 & 4 & 0 & 1 & 0 & 0 & 0 & 0 \\
 6 & 0 & 6 & 0 & 1 & 0 & 0 & 0 \\
 0 & 16 & 0 & 8 & 0 & 1 & 0 & 0 \\
 22 & 0 & 30 & 0 & 10 & 0 & 1 & 0 \\
 0 & 68 & 0 & 48 & 0 & 12 & 0 & 1 \\
\end{array}
\right).$$
The inverse of this matrix begins
$$\left(
\begin{array}{cccccccc}
 1 & 0 & 0 & 0 & 0 & 0 & 0 & 0 \\
 0 & 1 & 0 & 0 & 0 & 0 & 0 & 0 \\
 -2 & 0 & 1 & 0 & 0 & 0 & 0 & 0 \\
 0 & -4 & 0 & 1 & 0 & 0 & 0 & 0 \\
 6 & 0 & -6 & 0 & 1 & 0 & 0 & 0 \\
 0 & 16 & 0 & -8 & 0 & 1 & 0 & 0 \\
 -22 & 0 & 30 & 0 & -10 & 0 & 1 & 0 \\
 0 & -68 & 0 & 48 & 0 & -12 & 0 & 1 \\
\end{array}
\right).$$
We recall that a Riordan array $(g(x^2), x g(x^2))$ is called a quasi-involution if we have
$$(g(x^2), x g(x^2))^{-1}=(g(-x^2), x g(-x^2)).$$ This is the case for this example \cite{Structural}.
\section{The case of $r_n=0^n$}
In this section, we take the case of
$$A=\left(
\begin{array}{ccc}
 1 & a & b \\
 1 & c & d \\
\end{array}
\right)$$ and $\rho_0=1$, and $\rho_n=0$ for $n >0$.
Thus we must solve the equation
$$\frac{u}{x}=1+ a u + bu^2 + x(1+cu + du^2)+\rho_0\frac{u^2}{x}$$ to get
$f(x)=u$. We find that
$$\frac{f(x)}{x}=\frac{1+x}{1-ax-cx^2}c\left(\frac{x(1+x)(\rho_0+bx+dx^2)}{(1-ax-cx^2)^2}\right).$$
In this case, $\frac{f(x)}{x}$ expands to begin
$$1, a + \rho_0 + 1, a^2 + a(3\rho_0 + 1) + b + c + 2\rho_0^2 + 2\rho_0,\ldots,$$ and the corresponding Bell triangle begins $$\left(
\begin{array}{ccc}
 1 & 0 & 0 \\
 a+\rho_0+1 & 1 & 0 \\
 a^2+a (3 \rho_0+1)+b+c+2 \rho_0 (\rho_0+1) & 2 a+2 \rho_0+2 & 1 \\
\end{array}
\right).$$
In the case that $\rho_0=1$, we obtain
$$\frac{f(x)}{x}=\frac{1+x}{1-ax-cx^2}c\left(\frac{x(1+x)(1+bx+dx^2)}{(1-ax-cx^2)^2}\right).$$
\begin{example}
We consider the case of
$$A=\left(
\begin{array}{ccc}
 1 & 0 & 1 \\
 1 & 1 & 0 \\
\end{array}
\right)$$
and $\rho_n=0^n$. The resulting Bell triangle begins
$$\left(
\begin{array}{ccccccc}
 1 & 0 & 0 & 0 & 0 & 0 & 0 \\
 2 & 1 & 0 & 0 & 0 & 0 & 0 \\
 6 & 4 & 1 & 0 & 0 & 0 & 0 \\
 22 & 16 & 6 & 1 & 0 & 0 & 0 \\
 90 & 68 & 30 & 8 & 1 & 0 & 0 \\
 394 & 304 & 146 & 48 & 10 & 1 & 0 \\
 1806 & 1412 & 714 & 264 & 70 & 12 & 1 \\
\end{array}
\right).$$
This is the Bell matrix defined by the large Schroeder numbers \seqnum{A006318},
 $$(t_{n,k})=\left(\frac{1-x-\sqrt{1-6x+x^2}}{2x}, \frac{1-x-\sqrt{1-6x+x^2}}{2}\right),$$ and its $A$-sequence is the sequence
$$1,2,2,2,2,2,2,2,\ldots.$$
This follows since the solution $u(x)$ with $u(0)=0$ of the equation
$$\frac{u}{x}=1+u^2+x(1+u)+\frac{u^2}{x}$$ is given by
$$u(x)=\frac{1-x-\sqrt{1-6x+x^2}}{2}.$$
This is also the solution to the equation
$$\frac{u}{x}=1+u+\frac{u^2}{x}.$$
Thus a simpler $A$-matrix in this case is given by
$$A=\left(
\begin{array}{cc}
 1 & 1  \\
\end{array}
\right),$$ along with $\rho_n=0^n$.

In general, taking
$$A=\left(
\begin{array}{cc}
 1 & r  \\
\end{array}
\right),$$ along with $\rho_n=0^n$, leads to
$$\frac{f(x)}{x}=\frac{1-(r-1)x-\sqrt{1-2(r+1)x+(r-1)^2x^2}}{2x}=\frac{1}{1-(r-1)x}c\left(\frac{x}{(1-(r-1)x)^2}\right),$$ which is the generating function of the Narayana polynomials
$$1, r, r(r + 1), r(r^2 + 3r + 1), r(r^3 + 6r^2 + 6r + 1),\ldots.$$
In this case the triangle $(t_{n,k})$ begins
$$\left(
\begin{array}{ccccc}
 1 & 0 & 0 & 0 & 0 \\
 r & 1 & 0 & 0 & 0 \\
 r (r+1) & 2 r & 1 & 0 & 0 \\
 r \left(r^2+3 r+1\right) & r (3 r+2) & 3 r & 1 & 0 \\
 r \left(r^3+6 r^2+6 r+1\right) & 2 r \left(2 r^2+4 r+1\right) & 3 r (2 r+1) & 4 r & 1 \\
\end{array}
\right),$$ with $A$-sequence
$$1,r,r,r,\ldots.$$
\end{example}
\begin{example} We take
$$A=\left(
\begin{array}{ccc}
 1 & -2 & 2 \\
 1 & -1 & 1 \\
\end{array}
\right)$$ and $\rho_n=0^n$.
Then the Bell matrix that we obtain is \seqnum{A097609}, which begins
$$\left(
\begin{array}{ccccccc}
 1 & 0 & 0 & 0 & 0 & 0 & 0 \\
 0 & 1 & 0 & 0 & 0 & 0 & 0 \\
 1 & 0 & 1 & 0 & 0 & 0 & 0 \\
 1 & 2 & 0 & 1 & 0 & 0 & 0 \\
 3 & 2 & 3 & 0 & 1 & 0 & 0 \\
 6 & 7 & 3 & 4 & 0 & 1 & 0 \\
 15 & 14 & 12 & 4 & 5 & 0 & 1 \\
\end{array}
\right).$$
This is the Bell matrix
$$\left(\frac{1+x-\sqrt{1-2x-3x^2}}{2x(1-x)}, \frac{1+x-\sqrt{1-2x-3x^2}}{2(1-x)}\right)$$ defined by the so-called Motzkin sums \seqnum{A005043}. This triangle counts the number of Motzkin paths of length $n$ having $k$ horizontal steps at level $0$ (Emeric Deutsch). The $A$-sequence of this triangle is
$$1,0,1,1,1,1,\ldots.$$
\end{example}
We now turn to the issue of the Somos $4$ nature of the Hankel transform of the expansion of $\frac{f(x)}{x}$.
We have the following conjecture.
\begin{conjecture} The Hankel transform of the expansion of $\frac{f(x)}{x}$ in the case that
$$A=\left(
\begin{array}{ccc}
 1 & a & b \\
 1 & c & d \\
\end{array}
\right)$$
and $\rho_n=0^n$ is an $(\alpha, \beta)$ Somos $4$ sequence, where
$$\alpha=(4 + a^2 + 3b + a(4 + b) + c + d)^2,$$ and
\begin{align*}\beta&=-16 - a^5 + b^4 - 3a^4(3 + b) + 2b^3(-2 + c) - 8c - 4c^2 - c^3\\
 &+ b^2(-28 - c + c^2 - 8d) - 8d - 6cd - 2c^2d - d^2 - cd^2\\
 &- a^3(32 + 23b + 3b^2 + c + 2d) - 2b(20 + c^2 + 9d + d^2 + 3c(2 + d))\\
 &- a^2(56 + 18b^2 + b^3 + 10d + c(6 + d) + b(66 + c + 5d)) \\
 &- a(48 + 3b^3 + 2c^2 + 16d + d^2 + b^2(38 - 2c + 3d) + c(12 + 5d) + b(84 - c^2 + 19d + c(8 + d))).\end{align*}
 \end{conjecture}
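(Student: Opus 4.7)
The plan is to pass from the Riordan-array closed form of $f(x)/x$ to an explicit Jacobi continued fraction, then to use the standard product formula for Hankel determinants, and finally to identify the resulting bilinear recurrence with the iteration of a QRT map on an elliptic curve.

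The first step is to expand
$$\frac{f(x)}{x}=\frac{1+x}{1-ax-cx^2}\,c\!\left(\frac{x(1+x)(1+bx+dx^2)}{(1-ax-cx^2)^2}\right)$$
into $1/(1-\beta_0 x-\gamma_1 x^2/(1-\beta_1 x-\gamma_2 x^2/(1-\cdots)))$. Because $c(y)$ itself has the classical S-fraction $1/(1-y/(1-y/\cdots))$, substituting $y=x(1+x)(1+bx+dx^2)/(1-ax-cx^2)^2$ and then contracting yields the coefficients $\beta_n, \gamma_n$ as rational functions of $a, b, c, d$, computable to any desired order. The second step is the standard J-fraction Hankel determinant formula $h_n=\prod_{k=1}^{n}\gamma_k^{\,n+1-k}$, which reduces the conjectured identity $h_n h_{n-4}=\alpha h_{n-1}h_{n-3}+\beta h_{n-2}^2$ to a bilinear identity relating $\gamma_{n-1}, \gamma_n, \gamma_{n+1}$, equivalently to the claim that the pair $(\gamma_n,\gamma_{n+1})$ traces an iteration of QRT type whose biquadratic invariant is controlled by $\alpha$ and $\beta$.

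The third step is to identify this QRT map with the multiplication-by-$P$ map on an elliptic curve $E_{a,b,c,d}$ whose Weierstrass coefficients are polynomials in $a, b, c, d$. Following the work of Hone and of van der Poorten on continued fractions and Somos-type sequences, one expects a $\sigma$-function representation $\gamma_n=\kappa\,\sigma((n+1)P)\sigma((n-1)P)/\sigma(nP)^2$ for the Weierstrass $\sigma$-function attached to $E_{a,b,c,d}$, so that $\alpha$ and $\beta$ emerge as explicit polynomial invariants of $E_{a,b,c,d}$. Matching against specific specializations (for example the $(1,1)$ and $(4,r^2-4)$ cases already computed in the preceding sections) will pin down the closed forms and confirm the conjecture; a further sanity check is to expand the first six or seven Hankel determinants symbolically in $\mathbb{Q}(a,b,c,d)$ and verify the Somos $4$ recurrence as a finite set of polynomial identities.

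The principal obstacle is producing the $\sigma$-function representation in the generic four-parameter setting, since the $\gamma_n$ grow rapidly in complexity and $E_{a,b,c,d}$ may degenerate on Zariski-closed subvarieties of parameter space. An alternative, purely algebraic route bypasses the elliptic geometry: verify directly that the plane map $(\gamma_{n-1},\gamma_n)\mapsto(\gamma_n,\gamma_{n+1})$ preserves a biquadratic invariant whose coefficients are dictated by the conjectured $\alpha$ and $\beta$, and then telescope to recover the Somos $4$ recurrence for the $h_n$. This trades geometric insight for a larger but entirely tractable symbolic calculation.
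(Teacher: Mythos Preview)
The statement you are attempting to prove is labelled a \emph{conjecture} in the paper, and the paper offers no proof: the concluding section explicitly says that ``the form of $\beta$ in general makes it uncertain at the moment how such a conjecture might be proven.'' So there is no proof in the paper to compare your attempt against; only supporting numerical examples are given.

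Your proposal is a reasonable research programme, but it is not a proof, and you acknowledge as much when you write that ``the principal obstacle is producing the $\sigma$-function representation in the generic four-parameter setting.'' Two concrete gaps stand out. First, the passage from the closed form
\[
\frac{f(x)}{x}=\frac{1+x}{1-ax-cx^2}\,c\!\left(\frac{x(1+x)(1+bx+dx^2)}{(1-ax-cx^2)^2}\right)
\]
to a Jacobi continued fraction in $x$ is not immediate: substituting a cubic-over-quadratic expression for $y$ into the S-fraction of $c(y)$ and ``contracting'' does not automatically yield rational closed forms for $\beta_n,\gamma_n$; in general one obtains only a nonlinear recurrence for the $\gamma_n$, and you have not written that recurrence down. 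Second, both routes you propose for the final step (the $\sigma$-function representation and the biquadratic QRT invariant) are asserted rather than executed. The QRT alternative is the more promising of the two, but to carry it out you would need the explicit three-term recursion for $\gamma_n$ in $a,b,c,d$ and then a symbolic verification that a specific biquadratic $I(\gamma_n,\gamma_{n+1})$ is conserved; none of that computation appears here. Until those steps are actually performed, what you have is an outline consistent with the known literature on Somos sequences and continued fractions, not a proof of the conjecture.
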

\begin{example}
For
$$A=\left(
\begin{array}{ccc}
 1 & 0 & -1 \\
 1 & 0 & -2 \\
\end{array}
\right)$$ and $\rho_n=0^n$, we obtain the Bell triangle that begins
$$\left(
\begin{array}{ccccccc}
 1 & 0 & 0 & 0 & 0 & 0 & 0 \\
 2 & 1 & 0 & 0 & 0 & 0 & 0 \\
 3 & 4 & 1 & 0 & 0 & 0 & 0 \\
 4 & 10 & 6 & 1 & 0 & 0 & 0 \\
 2 & 20 & 21 & 8 & 1 & 0 & 0 \\
 -11 & 29 & 56 & 36 & 10 & 1 & 0 \\
 -59 & 10 & 117 & 120 & 55 & 12 & 1 \\
\end{array}
\right).$$ For this, we have
$$\frac{f(x)}{x}=(1+x)c(x(1+x)(1-x-2x^2)).$$
The Hankel transform of the expansion of this generating function begins
$$1,-1,3,4,5,-31,\ldots.$$
This is a $(1,1)$ Somos $4$ sequence.
\end{example}
\begin{example}
For
$$A=\left(
\begin{array}{ccc}
 1 & 0 & -1 \\
 1 & -2 & 2 \\
\end{array}
\right)$$ and $\rho_n=0^n$, we obtain the Bell matrix that begins
$$\left(
\begin{array}{ccccccc}
 1 & 0 & 0 & 0 & 0 & 0 & 0 \\
 2 & 1 & 0 & 0 & 0 & 0 & 0 \\
 1 & 4 & 1 & 0 & 0 & 0 & 0 \\
 0 & 6 & 6 & 1 & 0 & 0 & 0 \\
 4 & 4 & 15 & 8 & 1 & 0 & 0 \\
 17 & 9 & 20 & 28 & 10 & 1 & 0 \\
 41 & 50 & 27 & 56 & 45 & 12 & 1 \\
\end{array}
\right).$$
For this, we have
$$\frac{f(x)}{x}=\frac{1+x}{1+2x^2}c\left(\frac{x(1+x)(1-x+2x^2}{(1+2x^2)^2}\right).$$
The Hankel transform of the expansion of this generating function begins
$$1, -3,-13, 14, 465, 1819,\ldots.$$
This is a $(1,3)$ Somos $4$ sequence.
\end{example}

\section{The case $\rho(x)=1+rx$}
We briefly consider the case
$$A=\left(
\begin{array}{cc}
 1 & 1  \\
\end{array}
\right),$$ along with $\rho(x)=1+x$. We are thus led to the equation
$$\frac{u}{x}=1+u+\frac{u^2(1+u)}{x},$$ which has solution
$$u(x)=f(x)=\frac{1}{3}\left(\sqrt{4-3x}\sin\left(\frac{1}{3}\sin^{-1}\left(\frac{18x+11}{2(4-3x)^{\frac{3}{2}}}\right)\right)-1\right).$$
Equivalently, we have
$$f(x)= \text{Rev} \frac{x(1-x-x^2)}{1+x}.$$
We have that $\frac{f(x)}{x}$ expands to give the sequence that begins
$$1, 2, 7, 31, 154, 820, 4575, 26398, 156233,\ldots,$$ or \seqnum{A007863}, the number of ``hybrid binary trees'' with $n$ internal nodes. The corresponding Bell matrix begins
$$\left(
\begin{array}{ccccccc}
 1 & 0 & 0 & 0 & 0 & 0 & 0 \\
 2 & 1 & 0 & 0 & 0 & 0 & 0 \\
 7 & 4 & 1 & 0 & 0 & 0 & 0 \\
 31 & 18 & 6 & 1 & 0 & 0 & 0 \\
 154 & 90 & 33 & 8 & 1 & 0 & 0 \\
 820 & 481 & 185 & 52 & 10 & 1 & 0 \\
 4575 & 2690 & 1065 & 324 & 75 & 12 & 1 \\
\end{array}
\right).$$
The production matrix of this array starts
$$\left(
\begin{array}{cccccc}
 2 & 1 & 0 & 0 & 0 & 0 \\
 3 & 2 & 1 & 0 & 0 & 0 \\
 5 & 3 & 2 & 1 & 0 & 0 \\
 8 & 5 & 3 & 2 & 1 & 0 \\
 13 & 8 & 5 & 3 & 2 & 1 \\
 21 & 13 & 8 & 5 & 3 & 2 \\
\end{array}
\right),$$
indicating that
$$A(x)=\frac{1+x}{1-x-x^2}, \quad\text{ and } Z(x)=\frac{2+x}{1-x-x^2}.$$
Here, we recognize the shifted Fibonacci numbers \seqnum{A000045}. The Bell matrix $(t_{n,k})$ in this case is given by
$$\left(\frac{1-x-x^2}{1+x}, \frac{x(1-x-x^2)}{1+x}\right)^{-1}.$$
Similarly, the $A$-matrix
$$A=\left(
\begin{array}{cc}
 1 & 2  \\
\end{array}
\right),$$ along with $\rho(x)=1+x$, leads to the Bell matrix
$$\left(\frac{1-x-x^2}{1+2x}, \frac{x(1-x-x^2)}{1+2x}\right)^{-1}$$ which begins
$$\left(
\begin{array}{ccccccc}
 1 & 0 & 0 & 0 & 0 & 0 & 0 \\
 3 & 1 & 0 & 0 & 0 & 0 & 0 \\
 13 & 6 & 1 & 0 & 0 & 0 & 0 \\
 70 & 35 & 9 & 1 & 0 & 0 & 0 \\
 424 & 218 & 66 & 12 & 1 & 0 & 0 \\
 2756 & 1437 & 471 & 106 & 15 & 1 & 0 \\
 18778 & 9876 & 3390 & 856 & 155 & 18 & 1 \\
\end{array}
\right).$$
The case of
$$A=\left(
\begin{array}{cc}
 1 & 1  \\
\end{array}
\right),$$ along with $\rho(x)=1+2x$ is especially interesting.
We obtain the Bell matrix that begins
$$\left(
\begin{array}{ccccccc}
 1 & 0 & 0 & 0 & 0 & 0 & 0 \\
 2 & 1 & 0 & 0 & 0 & 0 & 0 \\
 8 & 4 & 1 & 0 & 0 & 0 & 0 \\
 40 & 20 & 6 & 1 & 0 & 0 & 0 \\
 224 & 112 & 36 & 8 & 1 & 0 & 0 \\
 1344 & 672 & 224 & 56 & 10 & 1 & 0 \\
 8448 & 4224 & 1440 & 384 & 80 & 12 & 1 \\
\end{array}
\right),$$ which has a production matrix that begins
$$\left(
\begin{array}{cccccc}
 2 & 1 & 0 & 0 & 0 & 0 \\
 4 & 2 & 1 & 0 & 0 & 0 \\
 8 & 4 & 2 & 1 & 0 & 0 \\
 16 & 8 & 4 & 2 & 1 & 0 \\
 32 & 16 & 8 & 4 & 2 & 1 \\
 64 & 32 & 16 & 8 & 4 & 2 \\
\end{array}
\right).$$
Thus we have
$$A(x)=\frac{1}{1-2x}, \quad\text{ and } Z(x)=\frac{2}{1-2x}.$$
The corresponding Bell matrix  $(t_{n,k})$ is given by
$$(1-2x, x(1-2x))^{-1}.$$
In effect, the solution to the equation
$$\frac{u}{x}=1+u+\frac{u^2(1+2u)}{x}$$ is given by
$$u(x)=f(x)=\frac{1-\sqrt{1-8x}}{4}.$$ This is the generating function of $2^n C_n$ \seqnum{A151374}.
Extending the two previous cases, we look at the case
$$A=\left(
\begin{array}{cc}
 1 & 2  \\
\end{array}
\right),$$ along with $\rho(x)=1+2x$.
The equation to be solved is now
$$\frac{u}{x}=1+2u+\frac{u^2(1+2u)}{x}$$ with solution
$$u(x)=f(x)=\frac{1}{3}\left(\sqrt{7-12x}\sin\left(\frac{1}{3}\sin^{-1}\left(\frac{2(18x+5)}{(7-12x)^{\frac{3}{2}}}\right)\right)-\frac{1}{2}\right).$$
Equivalently, we have
$$f(x)=\text{Rev} \frac{x(1-x-2x^2)}{1+2x}.$$
The Bell matrix $(t_{n,k})$ in this case begins
$$\left(
\begin{array}{ccccccc}
 1 & 0 & 0 & 0 & 0 & 0 & 0 \\
 3 & 1 & 0 & 0 & 0 & 0 & 0 \\
 14 & 6 & 1 & 0 & 0 & 0 & 0 \\
 83 & 37 & 9 & 1 & 0 & 0 & 0 \\
 554 & 250 & 69 & 12 & 1 & 0 & 0 \\
 3966 & 1802 & 528 & 110 & 15 & 1 & 0 \\
 29756 & 13580 & 4122 & 944 & 160 & 18 & 1 \\
\end{array}
\right),$$ with a production matrix that begins
$$\left(
\begin{array}{cccccc}
 3 & 1 & 0 & 0 & 0 & 0 \\
 5 & 3 & 1 & 0 & 0 & 0 \\
 11 & 5 & 3 & 1 & 0 & 0 \\
 21 & 11 & 5 & 3 & 1 & 0 \\
 43 & 21 & 11 & 5 & 3 & 1 \\
 85 & 43 & 21 & 11 & 5 & 3 \\
\end{array}
\right).$$ We recognise the shifted Jacobsthal numbers \seqnum{A001045}. We deduce that the above Bell matrix is given by
$$\left(\frac{1-x-2x^2}{1+2x}, \frac{x(1-x-2x^2)}{1+2x}\right)^{-1}.$$ The sequence $t_{n,0}$ is given by \seqnum{A215661}.

In general, the Bell matrix $(t_{n,k})$ defined by
$$A=\left(
\begin{array}{cc}
 1 & r  \\
\end{array}
\right),$$ along with $\rho(x)=1+rx$ is given by
$$\left(\frac{1-x-rx^2}{1+rx}, \frac{x(1-x-rx^2)}{1+rx}\right)^{-1}.$$
We have
$$t_{n,k}=t_{n-1,k-1}+r t_{n-1,k}+t_{n,k+1}+r t_{n,k+2},$$ with
$t_{0,0}=1$, $t_{1,0}=r+1$.

The sequence $t_{n,0}$ (the first column of the Bell matrix) then begins
$$1, r + 1, r^2 + 4r + 2, r^3 + 10r^2 + 15r + 5, r^4 + 20r^3 + 63r^2 + 56r + 14,\ldots.$$
This polynomial sequence $P_n(r)$ can be expressed as
$$P_n(r)=\sum_{k=0^n} \frac{1}{k+(2n+1)0^k}\binom{2n-k}{k-1+0^k} \binom{2n-k+1}{n-k}r^k,$$ where the coefficient array begins
$$\left(
\begin{array}{ccccccc}
 1 & 0 & 0 & 0 & 0 & 0 & 0 \\
 1 & 1 & 0 & 0 & 0 & 0 & 0 \\
 2 & 4 & 1 & 0 & 0 & 0 & 0 \\
 5 & 15 & 10 & 1 & 0 & 0 & 0 \\
 14 & 56 & 63 & 20 & 1 & 0 & 0 \\
 42 & 210 & 336 & 196 & 35 & 1 & 0 \\
 132 & 792 & 1650 & 1440 & 504 & 56 & 1 \\
\end{array}
\right).$$
An interesting feature of this array is the following. The diagonal sums of this array begin
$$1, 1, 3, 9, 30, 108, 406, 1577, 6280, 25499, 105169,\ldots.$$
This is \seqnum{A200074}. The generating function $g(x)$ of this sequence can be expressed as $g(x)=\frac{u(x)}{x}$, where we have
$$\frac{u}{x}=1+u^3+xu+\frac{u^2}{x}.$$ 
Thus this sequence is the first column of the Bell matrix $(t_{n,k})$ defined by the $A$-matrix
$$\left(
\begin{array}{cccc}
 1 & 0 & 0 & 1 \\
 0 & 1 & 0 & 0 \\
\end{array}
\right),$$ with $\rho_n=0^n$. This triangle $(t_{n,k})$ begins
$$\left(
\begin{array}{ccccccc}
 1 & 0 & 0 & 0 & 0 & 0 & 0 \\
 1 & 1 & 0 & 0 & 0 & 0 & 0 \\
 3 & 2 & 1 & 0 & 0 & 0 & 0 \\
 9 & 7 & 3 & 1 & 0 & 0 & 0 \\
 30 & 24 & 12 & 4 & 1 & 0 & 0 \\
 108 & 87 & 46 & 18 & 5 & 1 & 0 \\
 406 & 330 & 180 & 76 & 25 & 6 & 1 \\
\end{array}
\right).$$
We have
$$t_{n,k}=t_{n-1,k-1}+t_{n-1,k+2}+t_{n-2,k}+t_{n,k+1},$$ with
$t_{1,0}=1$.

\section{Perturbed orthogonal polynomials}
We consider the case when
$$A=\left(
\begin{array}{ccc}
 1 & a & b 
\end{array}
\right),$$ 
 and $\rho_n=c 0^n$.
 
Thus we must solve the equation
$$\frac{u}{x}=1+ a u + b u^2 + \frac{cu^2}{x}.$$ The required solution is
$$u = \frac{1-ax-\sqrt{1-2(a+2c)x+(a^2-4b)x^2}}{2(bx+c)}=\frac{x}{1-ax}c\left(\frac{x(bx+c)}{(1-ax)^2}\right).$$
Alternatively, we have
$$u=\text{Rev} \frac{x(1-cx)}{1+ax+bx^2}.$$
Thus the Bell matrix $(t_{n,k})$ with
$$t_{n,k}=t_{n-1,k-1}+ a t_{n-1,k}+ b t_{n-1,k+1}+ c t_{n,k+1}$$ and $t_{1,0}=a+c$ is given by
$$\left(\frac{1-cx}{1+a x + b x^2}, \frac{x(1-cx)}{1+ax+bx^2}\right)^{-1},$$
or
$$\left(\frac{1}{x}\text{Rev}\frac{x(1-cx)}{1+ax+bx^2},\text{Rev}\frac{x(1-cx)}{1+ax+bx^2}\right).$$
In the case that $c=0$, this is the moment matrix of the orthogonal polynomials
$$P_n(x)=(x-a)P_{n-1}(x)-b P_{n-2}(x),$$ with $P_0(x)=1$, $P_1(x)=x-a$.

Similarly, the triangle $(t_{n,k})$ with
$$t_{n,k}=t_{n-1,k-1}+ a t_{n-1,k}+ b t_{n-1,k+1}+c t_{n-1,k+2}+ d t_{n,k+1}$$ and $t_{1,0}=a+d$ is given by
$$\left(\frac{1}{x}\text{Rev}\frac{x(1-dx)}{1+ax+bx^2+cx^3},\text{Rev}\frac{x(1-dx)}{1+ax+bx^2+cx^3}\right).$$

The binomial transform of the Riordan array
$$\left(\frac{1}{1+a x + b x^2}, \frac{x}{1+ax+bx^2}\right)^{-1}$$ is the Riordan array
$$\left(\frac{1}{1+(a+1) x + b x^2}, \frac{x}{1+(a+1)x+bx^2}\right)^{-1}.$$ In a similar vein, we have the following result.
\begin{proposition} The binomial transform of $\frac{u}{x}$ where
$$\frac{u}{x}=1 + a u+ b u^2 + \frac{c u^2}{x}$$ is given by $\frac{v}{x}$ where
$$\frac{v}{x}=\frac{1}{1-x} (1+a v + b v^2) + \frac{c v^2}{x}.$$
\end{proposition}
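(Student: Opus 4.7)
The plan is to reduce the claim to a single change of variable $y = x/(1-x)$ in the defining equation for $u$. First, I would recall the standard identity that the binomial transform of a sequence with ordinary generating function $F(x)$ has generating function $\tfrac{1}{1-x} F(x/(1-x))$. Applied to $F(x) = u(x)/x$, together with the observation that $1/y = (1-x)/x$ when $y = x/(1-x)$, this gives that the binomial transform of $u/x$ equals $u(y)/x$. Setting $v(x) := u(x/(1-x))$, the binomial transform of $u/x$ is therefore precisely $v(x)/x$, matching the left-hand side of the statement. (Equivalently, in Riordan-group language, the product $(1/(1-x), x/(1-x)) \cdot (u/x, u)$ equals $(v/x, v)$, and reading off the first column gives the same $v$.)

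Next, I would substitute $y = x/(1-x)$ directly into the hypothesis
$$\frac{u(y)}{y} = 1 + a u(y) + b u(y)^2 + \frac{c u(y)^2}{y}.$$
Using $u(y) = v(x)$ and $1/y = (1-x)/x$, this becomes
$$\frac{v(1-x)}{x} = 1 + a v + b v^2 + \frac{c v^2 (1-x)}{x}.$$
Dividing both sides by $1-x$ yields
$$\frac{v}{x} = \frac{1 + a v + b v^2}{1-x} + \frac{c v^2}{x},$$
which is the asserted equation. Uniqueness of the formal power series solution with $v(0) = 0$ then confirms that $v(x) = u(x/(1-x))$ is indeed characterized by this equation.

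Because the argument is essentially a mechanical substitution, there is no real technical obstacle. The one point that warrants a line of justification is aligning the two possible meanings of \emph{binomial transform} — namely, the transform applied to the coefficient sequence of $u/x$, versus the operation of left-multiplying the Bell matrix $(u/x,u)$ by the Pascal Riordan array $(1/(1-x), x/(1-x))$ — and observing that both produce the same $v(x) = u(x/(1-x))$. Once this compatibility is recorded, the remainder of the proof is a one-line substitution.
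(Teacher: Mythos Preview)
Your proof is correct and is actually cleaner than the paper's. The paper proceeds differently: it first recalls the explicit closed form
\[
\frac{u}{x}=\frac{1}{1-ax}\,c\!\left(\frac{x(bx+c)}{(1-ax)^2}\right),
\]
then applies the binomial-transform substitution $x\mapsto x/(1-x)$ (together with the $1/(1-x)$ factor) directly to this closed form, obtaining
\[
\frac{1}{1-(a+1)x}\,c\!\left(\frac{x((b-c)x+c)}{(1-(a+1)x)^2}\right),
\]
and finally observes that this is exactly the solution of the asserted equation for $v/x$. In other words, the paper verifies the identity at the level of explicit Catalan-type solutions, whereas you operate purely at the level of the defining functional equation via the change of variable $y=x/(1-x)$. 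Your route is more elementary and avoids any appeal to the closed form; the paper's route has the side benefit of displaying the explicit generating function of the transformed sequence, which it immediately exploits to read off the new $A$-matrix parameters $(a+1,\,b-c,\,c)$.
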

\begin{proof}
We have $$\frac{u}{x}=\frac{1}{1-ax}c\left(\frac{x(bx+c)}{(1-ax)^2}\right).$$
We find that the binomial transform $\frac{1}{1-x} u\left(\frac{x}{1-x}\right)$ of $u$ is given by
$$\frac{1}{1-x} u\left(\frac{x}{1-x}\right)=\frac{1}{1-(a+1)x}c\left(\frac{x(x(b-c)+c)}{(1-(a+1)x)^2}\right).$$
But this is precisely $\frac{v}{x}$ where
$$\frac{v}{x}=\frac{1}{1-x} (1+a v + b v^2) + \frac{c v^2}{x}.$$
\end{proof}
Thus the Riordan array with $A$-matrix $A=(1,a,b)$ and $\rho_n=c 0^n$ will have a binomial transform with the infinite $A$-matrix
$$\left(
\begin{array}{ccc}
 1 & a & b \\
 1 & a & b \\
 1 & a & b \\
 1 & a & b \\
 \vdots & \vdots & \vdots \\
\end{array}
\right)$$ and $\rho_n=c 0^n$.
\begin{example}
The Riordan array
$$\left(\frac{1-5x}{1+2x+3x^2}, \frac{x(1-5x)}{1+2x+3x^2}\right)^{-1}$$ begins
$$\left(
\begin{array}{ccccccc}
 1 & 0 & 0 & 0 & 0 & 0 & 0 \\
 7 & 1 & 0 & 0 & 0 & 0 & 0 \\
 87 & 14 & 1 & 0 & 0 & 0 & 0 \\
 1331 & 223 & 21 & 1 & 0 & 0 & 0 \\
 22731 & 3880 & 408 & 28 & 1 & 0 & 0 \\
 415427 & 71665 & 7990 & 642 & 35 & 1 & 0 \\
 7949259 & 1380682 & 159591 & 14004 & 925 & 42 & 1 \\
\end{array}
\right).$$
Note for instance that
$$3880=1331 + 2 \cdot 223 + 3\cdot 21 + 5 \cdot 408.$$
Consider the binomial transform of this matrix, given by
$$\left(
\begin{array}{ccccccc}
 1 & 0 & 0 & 0 & 0 & 0 & 0 \\
 8 & 1 & 0 & 0 & 0 & 0 & 0 \\
 102 & 16 & 1 & 0 & 0 & 0 & 0 \\
 1614 & 268 & 24 & 1 & 0 & 0 & 0 \\
 28606 & 4860 & 498 & 32 & 1 & 0 & 0 \\
 543298 & 93440 & 10250 & 792 & 40 & 1 & 0 \\
 10810754 & 1873548 & 214086 & 18296 & 1150 & 48 & 1 \\
\end{array}
\right).$$
Note now that, for instance, we have
$$10250=1 \cdot(4860 + 268 + 16 + 1) + 2\cdot(498 + 24 + 1) + 3\cdot(32 + 1) + 5 \cdot 792.$$
\end{example}

\section{Calculating the $A$-sequence}
We give an example of how to calculate the $A$ sequence, given a suitable $A$-matrix and $\rho$ sequence.
Thus we take
$$A=\left(
\begin{array}{ccc}
 1 & 1 & 1 \\
 0 & 1 & 0 \\
\end{array}\right)$$ and $\rho_n=0^n$.
This leads us to the equation
$$\frac{u}{x}=1+u+u^2+xu+\frac{u^2}{x},$$
whose solution is given by
$$u=f(x)=\frac{1-x-x^2-\sqrt{1-6x-5x^2+2x^3+x^4}}{2(1+x)}=\frac{x}{1-x-x^2}c\left(\frac{x(1+x)}{(1-x-x^2)^2}\right).$$ From this we can calculate the $A$-sequence since its generating function $A(x)$ is given by
$$A(x)=\frac{x}{\bar{f}(x)}.$$
An alternative method is to start with the equation
$$\frac{u}{x}=1+u+u^2+xu+\frac{u^2}{x},$$ which we write using $u=f(x)$ as
$$\frac{f(x)}{x}=1+f(x)+f(x)^2+xf(x)+\frac{(f(x))^2}{x}.$$
Substituting $\bar{f}(x)$ for $x$, and using the fact that $f(\bar{f}(x))=x$, we then obtain
$$\frac{x}{\bar{f}(x)}=1+x+x^2+x \bar{f}(x)+\frac{x^2}{\bar{f}(x)}.$$
Thus we must solve the equation
$$v=1+x+x^2+\frac{x^2}{v}+xv,$$ for $v=A(x)=\frac{x}{\bar{f}(x)}$.
We obtain that
$$A(x)=v=\frac{1+x+x^2+\sqrt{1+2x+7x^2-2x^3+x^4}}{2(1-x)}.$$
This expands to give the sequence that begins
$$1, 2, 4, 2, 2, 8, -2, -10, 52, -26, -202, 576, \ldots.$$
The Bell matrix $(t_{n,k})$ in this case begins
$$\left(
\begin{array}{ccccccccc}
 1 & 0 & 0 & 0 & 0 & 0 & 0 & 0 & 0 \\
 2 & 1 & 0 & 0 & 0 & 0 & 0 & 0 & 0 \\
 8 & 4 & 1 & 0 & 0 & 0 & 0 & 0 & 0 \\
 34 & 20 & 6 & 1 & 0 & 0 & 0 & 0 & 0 \\
 162 & 100 & 36 & 8 & 1 & 0 & 0 & 0 & 0 \\
 820 & 524 & 206 & 56 & 10 & 1 & 0 & 0 & 0 \\
 4338 & 2832 & 1182 & 360 & 80 & 12 & 1 & 0 & 0 \\
 23694 & 15704 & 6828 & 2248 & 570 & 108 & 14 & 1 & 0 \\
 132612 & 88876 & 39818 & 13856 & 3850 & 844 & 140 & 16 & 1 \\
\end{array}
\right),$$ and its production matrix takes the form
$$\left(
\begin{array}{cccccccc}
 2 & 1 & 0 & 0 & 0 & 0 & 0 & 0 \\
 4 & 2 & 1 & 0 & 0 & 0 & 0 & 0 \\
 2 & 4 & 2 & 1 & 0 & 0 & 0 & 0 \\
 2 & 2 & 4 & 2 & 1 & 0 & 0 & 0 \\
 8 & 2 & 2 & 4 & 2 & 1 & 0 & 0 \\
 -2 & 8 & 2 & 2 & 4 & 2 & 1 & 0 \\
 -10 & -2 & 8 & 2 & 2 & 4 & 2 & 1 \\
 52 & -10 & -2 & 8 & 2 & 2 & 4 & 2 \\
\end{array}
\right).$$

\section{Conclusions}
We have given examples of Riordan arrays defined by simple $A$-matrices and simple $\   rho$ sequences, and we have shown the form of the calculations required to go from these data to a corresponding Bell matrix. In some cases, we have examined the associated $A$-sequence. Some special matrices, including a Riordan quasi-involution and certain ``perturbed'' moment matrices have been studied in terms of their defining $A$-matrix and $\rho$ sequence.

We have conjectured that an $A$-matrix of the form
$$A=\left(
\begin{array}{ccc}
 1 & a & b \\
 1 & c & d \\
\end{array}
\right)$$ leads to Hankel transforms that are $(\alpha, \beta)$ Somos $4$ sequences, in the case that $\rho_n=0$ for all $n$ and when $\rho_n=0^n$.  Specific examples bear this out, but the form of $\beta$ in general makes it uncertain at the moment how such a conjecture might be proven, or generalized. Other open questions that remain are the relationship between the parameters $(a,b,c,d)$ and the corresponding coefficients of the related elliptic curves. It is clear that these problems deserve further study.

\bigskip
\hrule
\bigskip
\noindent 2010 {\it Mathematics Subject Classification}: Primary 15B36;
 Secondary 05A15, 11C20, 11B37, 11B83, 15B36.
\noindent \emph{Keywords:}  Number triangle, Riordan array, $A$-matrix, $A$-sequence, Somos $4$ sequence, Hankel transform.

\bigskip
\hrule
\bigskip
\noindent (Concerned with sequences
\seqnum{A000045},
\seqnum{A000108},
\seqnum{A001045},
\seqnum{A005043},
\seqnum{A006318},
\seqnum{A006720},
\seqnum{A006769},
\seqnum{A007863},
\seqnum{A097609},
\seqnum{A104545},
\seqnum{A151374},
\seqnum{A162547},
\seqnum{A171416},
\seqnum{A178628}, and
\seqnum{A215661}.)

\end{document}